\documentclass[11pt, oneside]{article}   	
\usepackage{geometry}                		
\geometry{letterpaper}                   		
\usepackage{graphicx}				
\usepackage{color}								
\usepackage{amssymb,amsmath,amsthm}
\newtheorem{thm}{Theorem}
\newtheorem{lem}[thm]{Lemma}

\newtheorem{cor}[thm]{Corollary}

\newtheorem{conj}{Conjecture}
\newtheorem{ex}{Example}

\newtheorem{question}{Question}
\numberwithin{question}{section}

\date{}

\usepackage{graphicx}				
								
\usepackage{amssymb}

\title{Hilbert coefficients of quadratic algebras}
\author{Ralf Fr\"oberg}

\begin{document}
\maketitle

\begin{abstract}
If $R=k[x_1,\ldots,x_n]/I$ is a graded artinian algebra, then the length of $k[x_1,\ldots,x_n]/I^s$ becomes a polynomial in $s$ of degree $n$
for large $s$. If we write this polynomial as $\sum_{i=0}^n(-1)^ie_i{s+n-i-1\choose n-i}$, then the $e_i$'s are called Hilbert coefficients of $I$.
We will study length and Hilbert coefficients of some classes of quadratic algebras.
\end{abstract}

\section{Introduction}
Let $S_n=k[x_1,\ldots,x_n]$, $k$ a field, and let $I$ be a graded ideal in $S_n$, such that $S_n/I$ is Artinian. 
Let $S_n/I^s(t)=\sum_{i\ge0}h_it^i$, where $h_i=\dim_k(S_n/I^s)_i$, be the Hilbert series of $S_n/I^s$ (which is
a polynomial since $S_n/I^s$ is Artinian) and $l(S_n/I^s)=\sum_{i\ge0}h_i$ the length of $S_n/I^s$. Samuel has shown \cite{sa}
that the length of $S_n/I^s$ is a polynomial in $s$ of degree $n$ for all large values of $s$. If we write this polynomial in the form
$$e_0(I){s+n-1\choose n}+\cdots+(-1)^ie_i(I){s+n-i-1\choose n-i}+\cdots+(-1)^ne_n(I),$$
then the $e_i$'s are called the Hilbert coefficients of $I$, and in particular $e_0(I)$ is called the multiplicity of $I$. In fact,
Samuel and others consider the more general case when $I$ is primary for the maximal ideal in a Noetherian local ring, but we stick to the
graded case. The first paper on Hilbert coefficients that we know of is by Northcott \cite{no}. There it is shown that if $I$ 
is an $m$-primary ideal in a CM local ring $(Q,m)$, then
$e_1(I)\ge0$ with equality if and only if $I$ is generated by a system of parameters. Later Narita \cite{nar} showed that with the same conditions $e_2(I)\ge0$
and gave conditions for equality. He also showed that $e_3(I)$ could be negative. Later most articles on Hilbert coefficients has dealt with
connection to depth of the associated graded ring, and with different generalizations of the concept of Hilbert coefficients.

We will study the Hilbert series $R(t)=\sum_{i\ge0}\dim_kR_it^i$ and the Hilbert coefficients for artinian quadratic algebras,
i.e. for rings 
$R=S_n/I^s$, $I$ generated in degree 2. The Hilbert series gives the length as $l(R)=R(1)$. 

\begin{conj}
If $S_n/I$ is a quadratic algebra, then $e_i(I)\ge0$ for all $i$.
\end{conj}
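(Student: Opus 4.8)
The plan is to translate the conjecture into a statement about the associated graded ring and then to control the sign of its $h$-vector. Write $G=\mathrm{gr}_I(S_n)=\bigoplus_{t\ge 0}I^t/I^{t+1}$, a finitely generated graded algebra over the Artinian ring $G_0=S_n/I$, generated in degree $1$ by $I/I^2$ and of Krull dimension $n$. Its Hilbert series has the form $\sum_t\dim_k(I^t/I^{t+1})\,z^t=g(z)/(1-z)^n$ for a polynomial $g(z)=\sum_j g_jz^j\in\mathbb{Z}[z]$. Since $l(S_n/I^s)=\sum_{t=0}^{s-1}\dim_k(I^t/I^{t+1})$, summing the Hilbert series of $G$ gives the exact identity
\[
\sum_{s\ge 0}l(S_n/I^s)\,z^s=\frac{z\,g(z)}{(1-z)^{n+1}}.
\]
Expanding the right-hand side around $z=1$ (set $u=1-z$ and write $g(1-u)=\sum_i(-1)^i\frac{g^{(i)}(1)}{i!}u^i$), a short partial-fraction computation reproduces exactly the binomial expansion in the definition of the $e_i$ and yields
\[
e_i(I)=\frac{g^{(i)}(1)}{i!}=\sum_j\binom{j}{i}g_j.
\]
Thus the conjecture is equivalent to the assertion that $g(1+w)$ has nonnegative coefficients, and it is \emph{implied} by the cleaner statement that the $h$-vector $(g_0,g_1,\dots)$ of $G$ is itself nonnegative.

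First I would dispose of the case where $G$ is Cohen--Macaulay. Assuming $k$ infinite, a general sequence of $n$ elements of $G_1=I/I^2$ is a homogeneous system of parameters, and if $G$ is Cohen--Macaulay it is a regular sequence; dividing it out multiplies the Hilbert series by $(1-z)^n$, so $g(z)$ becomes the Hilbert series of an Artinian algebra and its coefficients are nonnegative. This already settles every quadratic complete intersection $I=(q_1,\dots,q_n)$: there $G\cong(S_n/I)[T_1,\dots,T_n]$ is Cohen--Macaulay (a polynomial ring over the Artinian, hence CM, ring $S_n/I$), with $g(z)=\mathrm{Hilb}_{S_n/I}(z)$, and more generally it settles every quadratic $I$ whose associated graded ring has large enough depth.

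For quadratic ideals beyond the Cohen--Macaulay case I would proceed by dimension reduction. With $k$ infinite, as long as $\mathrm{depth}\,G>0$ there is a degree-one $G$-regular element; dividing by it preserves $g(z)$ and lowers $\dim G$ by one, so a positive-depth induction peels off one dimension at a time, the base cases $n\le 2$ being the explicit computations above. If the reduction can be carried out all $n$ times one lands in an Artinian algebra with Hilbert series $g(z)$, recovering $g\ge 0$; this is exactly the Cohen--Macaulay situation. In parallel I would test the conjecture on quadratic monomial ideals, where $\dim_k(S_n/I^s)$ is a lattice-point count in a dilated polyhedral region, and try to exhibit each $e_i=g^{(i)}(1)/i!$ as a genuinely nonnegative enumeration.

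The hard part is that the depth of $G$ can drop to zero before the reduction finishes; there the $h$-vector of $G$ acquires negative entries and the clean sufficient condition $g\ge 0$ fails, so one must instead prove the weaker fact that every Taylor coefficient $g^{(i)}(1)/i!$ stays nonnegative despite cancellation. The crux is to convert degree-$2$ generation into quantitative control — on the reduction number of $I$, equivalently on $\deg g(z)$, and on the graded pieces of the local cohomology of $G$ that measure the non--Cohen--Macaulay defect — strong enough to force the alternating sums $g^{(i)}(1)$ to remain nonnegative. I expect this estimate, rather than the translation in the first step, to be where essentially all the difficulty lies. The flat-degeneration shortcut is blocked for the same structural reason: $\mathrm{in}(I^s)\neq(\mathrm{in}\,I)^s$ in general, so passing to initial monomial ideals does not commute with forming powers and cannot by itself transport the monomial case to the general one.
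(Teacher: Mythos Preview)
The statement you are addressing is \emph{Conjecture 1} in the paper, not a theorem; the paper offers no proof whatsoever, only computational evidence in the form of explicit Hilbert series and Hilbert coefficients for complete intersections, a few generic ideals in small numbers of variables, and several quadratic monomial ideals. So there is no ``paper's own proof'' to compare against.

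Your translation of the problem is standard and correct: writing the Hilbert series of $G=\mathrm{gr}_I(S_n)$ as $g(z)/(1-z)^n$ one indeed has $e_i(I)=g^{(i)}(1)/i!=\sum_j\binom{j}{i}g_j$, so the conjecture is equivalent to the nonnegativity of the coefficients of $g(1+w)$, and is implied by nonnegativity of the $h$-vector $(g_j)$ itself. Your treatment of the Cohen--Macaulay case (in particular the complete-intersection case) is fine and recovers what the paper gets from Theorem~1 and Corollary~2.

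However, your write-up is not a proof of the conjecture, and you say so yourself: once $\mathrm{depth}\,G=0$ the $h$-vector of $G$ can have negative entries, and you provide no mechanism to show that the derived quantities $g^{(i)}(1)/i!$ nevertheless stay nonnegative for quadratic $I$. The sentence about converting ``degree-$2$ generation into quantitative control on the reduction number and on local cohomology'' is a hope, not an argument; nothing in the proposal explains why generation in degree~$2$ should force those alternating sums to be nonnegative. Since this is exactly the content of the conjecture, what you have is a correct reformulation plus the easy Cohen--Macaulay special case, with the actual conjecture left open---which is precisely the status the paper assigns to it.
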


We will start, in Section 2, with  ideals generated by general elements. If $I$ is generated by $n$ general elements, the smallest possible number, then $S/I$ is
a complete intersection, i.e. $I$ is generated by a regular sequence. In that case the Hilbert series of $S_n/I^s$ is more or less known. 
We try to give a bit more explicit expressions for the Hilbert series for quadratic algebras.
If $I$ is generated by more than $n$ general elements, very little is known, and we
give results and conjectures for the Hilbert series of $S_n/I^s$, and thus for the Hilbert coefficients, in some small cases. Then, in Section 3, we
turn to the opposite, namely we consider (quadratic) monomial ideals. We start with the extreme case $I=(x_1,\ldots,x_n)^2$ in $S_n$. 
Here the determination of the Hilbert series 
is trivial, but we have a nice conjecture for the Hilbert coefficients of $I^s$, proved for $n\le11$. Finally we consider all artinian rings with quadratic 
monomial relations in three and four variables.

\section{Ideals generated by general (quadratic) elements}
The following is well known \cite[Theorem1.2]{fi}, \cite[Prop.~6, p.~208]{nag}.

\begin{thm}
Let $f_1,\ldots,f_n$ be a graded regular sequence in $S_n=k[x_1,\ldots,x_n]$, $k$ a field, $\deg(f_i)=d_i$. If $Q=(f_1,\ldots,f_n)$, then
$$l(S_n/Q^s)=l(S_n/Q){s+n-1\choose n}.$$
\end{thm}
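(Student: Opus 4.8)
The plan is to compute $l(S_n/Q^s)$ by filtering $S_n$ by the powers of $Q$ and adding up the lengths of the successive quotients. Since $f_1,\ldots,f_n$ is a regular sequence of length $n$ in the $n$-dimensional ring $S_n$, the quotient $S_n/Q$ is Artinian, so every $S_n/Q$-module appearing below has finite length. From the chain $S_n\supseteq Q\supseteq Q^2\supseteq\cdots\supseteq Q^s$ and additivity of length along its successive quotients one gets
$$l(S_n/Q^s)=\sum_{j=0}^{s-1}l(Q^j/Q^{j+1}),$$
so it suffices to understand each graded piece $Q^j/Q^{j+1}$ as an $S_n/Q$-module.

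The key structural input, and the step where the regularity hypothesis does all the work, is that the associated graded ring $\operatorname{gr}_Q(S_n)=\bigoplus_{j\ge0}Q^j/Q^{j+1}$ is a polynomial ring $(S_n/Q)[T_1,\ldots,T_n]$ over $S_n/Q$, with $T_i$ the image of $f_i$ in $Q/Q^2$. Equivalently, the monomials $f_1^{a_1}\cdots f_n^{a_n}$ with $a_1+\cdots+a_n=j$ form a free $S_n/Q$-basis of $Q^j/Q^{j+1}$. I would prove this by showing that the only relations among $f_1,\ldots,f_n$ are the Koszul relations, which is precisely the assertion that the natural surjection from the symmetric algebra $\operatorname{Sym}_{S_n/Q}(Q/Q^2)$ onto $\operatorname{gr}_Q(S_n)$ is an isomorphism; this is the standard characterization of regular sequences and can be established by induction on $n$, using at each stage that $f_n$ is a nonzerodivisor on the relevant quotients. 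This is the main obstacle: everything else is bookkeeping.

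Granting the structure theorem, $Q^j/Q^{j+1}$ is free over $S_n/Q$ of rank equal to the number of degree-$j$ monomials in the $n$ variables $T_1,\ldots,T_n$, namely $\binom{j+n-1}{n-1}$. Note that although the $T_i$ carry different internal degrees $d_i$, only the rank enters the length computation, so the varying degrees are irrelevant here; hence
$$l(Q^j/Q^{j+1})=\binom{j+n-1}{n-1}\,l(S_n/Q).$$
Substituting into the sum and using the hockey-stick identity $\sum_{j=0}^{s-1}\binom{j+n-1}{n-1}=\binom{s+n-1}{n}$ gives
$$l(S_n/Q^s)=l(S_n/Q)\sum_{j=0}^{s-1}\binom{j+n-1}{n-1}=l(S_n/Q)\binom{s+n-1}{n},$$
which is the claim. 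It is worth noting that this identity holds for all $s\ge1$, not merely for large $s$, so for a complete intersection the Hilbert polynomial is attained exactly. As a sanity check, the case $n=1$ reads $l(k[x_1]/(f_1^s))=s\,\deg f_1=s\,l(S_1/Q)=\binom{s}{1}l(S_1/Q)$, as expected.
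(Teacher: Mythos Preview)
Your proof is correct. Note, however, that the paper does not give its own proof of this theorem: it is stated as well known, with references to \cite{fi} and \cite{nag}, so there is nothing to compare against in the paper itself. Your argument---filtering by powers of $Q$, invoking the standard fact that for a regular sequence the associated graded ring $\operatorname{gr}_Q(S_n)$ is polynomial over $S_n/Q$, and summing via the hockey-stick identity---is precisely the classical route, and is essentially what one finds in the cited sources.
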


\begin{cor}
We have $e_0(S_n/Q^s)=\prod_{i=1}^nd_i$ and $e_i(S_n/Q)=0$ if $i>0$.
\end{cor}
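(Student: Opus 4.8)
The plan is to read off both assertions directly from the Theorem together with the classical Hilbert-series computation for a complete intersection. First I would record a uniqueness observation: the polynomials $\binom{s+n-i-1}{n-i}$ for $i=0,\ldots,n$ have pairwise distinct degrees $n,n-1,\ldots,0$ in $s$, so they form a basis of the space of rational polynomials in $s$ of degree at most $n$. Consequently the expansion defining the Hilbert coefficients is unique, and I may determine the $e_i(Q)$ simply by expressing any known formula for $l(S_n/Q^s)$ in this basis and comparing coefficients.

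The Theorem supplies exactly such a formula, namely $l(S_n/Q^s)=l(S_n/Q)\binom{s+n-1}{n}$, which already lies in the chosen basis: it is the single term with $i=0$ and coefficient $l(S_n/Q)$, every other coefficient being zero. Matching against the definition then gives $e_0(Q)=l(S_n/Q)$ and $(-1)^i e_i(Q)=0$, hence $e_i(Q)=0$, for all $i>0$. This disposes of the vanishing of the higher coefficients.

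It remains to identify $l(S_n/Q)$ with $\prod_{i=1}^n d_i$. Here I would use that a homogeneous nonzerodivisor $f$ of degree $d$ on a graded module $M$ sits in the short exact sequence $0\to M(-d)\xrightarrow{\,f\,}M\to M/fM\to0$, so that passing to the quotient multiplies the Hilbert series by $(1-t^d)$. Applying this successively along the regular sequence $f_1,\ldots,f_n$, starting from the series $(1-t)^{-n}$ of $S_n$, yields $\mathrm{Hilb}_{S_n/Q}(t)=\prod_{i=1}^n\frac{1-t^{d_i}}{1-t}=\prod_{i=1}^n(1+t+\cdots+t^{d_i-1})$. Since $S_n/Q$ is Artinian this is an honest polynomial, so evaluation at $t=1$ is legitimate and computes the length as $l(S_n/Q)=\prod_{i=1}^n d_i$, completing the identification of $e_0$.

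There is no genuine obstacle in this argument; the two points deserving a word of care are the uniqueness of the binomial-basis expansion, which I have isolated at the outset, and the fact that the telescoping product is evaluated at $t=1$ only after it has collapsed to a polynomial. All the real content of the statement is carried by the Theorem.
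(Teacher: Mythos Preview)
Your argument is correct and follows essentially the same route as the paper's proof, which simply appeals to the definition of the $e_i$'s together with the known fact $l(S_n/Q)=\prod_{i=1}^n d_i$. You have merely unpacked both of these ingredients in greater detail, supplying the uniqueness argument for the binomial expansion and the standard Hilbert-series derivation of the length of a complete intersection.
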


\begin{proof}
This follows from the definition of the $e_i$'s and from $l(S_n/Q)=\prod_{i=1}^nd_i$.
\end{proof}

The graded Betti numbers for $S_n/Q^s$ are described in \cite{gu-vt}.
As a corollary they got an expression for the Hilbert series. (A similar theorem for the Hilbert series exists in \cite[Theorem 4.1]{fi}.) 
We will give a formula for the Hilbert series of $S_n/Q^s$ which is a bit more explicit in the case when $Q$ is generated in degree 2.
We now describe the result on the Hilbert series in \cite{gu-vt}. 

\medskip
Let $ L_{n,s}=\{(a_1,\ldots,a_n)\in\mathbb N^n| \sum_{i=1}^na_i\le s-1\}$. 

\begin{thm}\cite[Corollary 2.3]{gu-vt}
$$H_{S_n/Q^s}(i)=\sum_{(a_1,\ldots,a_n)\in L_{n,s}}H_{S_n/Q}(i-\sum_{i=1}^na_id_i)$$
where $H_R$ is the Hilbert function of $R$.
\end{thm}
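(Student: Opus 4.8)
The plan is to exploit the fact that a regular sequence completely controls the associated graded ring $\mathrm{gr}_Q(S_n)=\bigoplus_{t\ge0}Q^t/Q^{t+1}$. The classical structure theorem for regular sequences (see Matsumura, or derive it from the acyclicity of the Koszul complex) says that the natural surjection
$$(S_n/Q)[T_1,\ldots,T_n]\longrightarrow \mathrm{gr}_Q(S_n),\qquad T_j\mapsto f_j \bmod Q^2,$$
is an isomorphism of graded $(S_n/Q)$-algebras, where $T_j$ is assigned internal degree $d_j$. Concretely, this means that for each $t$ the module $Q^t/Q^{t+1}$ is free over $S_n/Q$ with basis the monomials $f^a=\prod_{j=1}^n f_j^{a_j}$ having $\sum_j a_j=t$, and the basis element $f^a$ lives in internal degree $\sum_j a_jd_j$.

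First I would record the Hilbert-function consequence of this freeness. Since each $f^a$ contributes a copy of $S_n/Q$ shifted up in degree by $\sum_j a_jd_j$, additivity of dimension over the direct sum gives
$$H_{Q^t/Q^{t+1}}(i)=\sum_{\sum_j a_j=t}H_{S_n/Q}\!\left(i-\sum_{j=1}^n a_jd_j\right).$$
Next I would assemble $H_{S_n/Q^s}$ from the filtration $S_n=Q^0\supseteq Q^1\supseteq\cdots\supseteq Q^s$. Each short exact sequence $0\to Q^{t+1}\to Q^t\to Q^t/Q^{t+1}\to0$ makes the Hilbert function additive, so telescoping yields $H_{S_n/Q^s}(i)=\sum_{t=0}^{s-1}H_{Q^t/Q^{t+1}}(i)$.

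Substituting the previous display and reindexing the resulting double sum finishes the argument: the indexing sets $\{a\in\mathbb N^n:\sum_j a_j=t\}$ for $t=0,1,\ldots,s-1$ are pairwise disjoint and their union is exactly $\{a\in\mathbb N^n:\sum_j a_j\le s-1\}=L_{n,s}$, so
$$H_{S_n/Q^s}(i)=\sum_{t=0}^{s-1}\ \sum_{\sum_j a_j=t}H_{S_n/Q}\!\left(i-\sum_j a_jd_j\right)=\sum_{(a_1,\ldots,a_n)\in L_{n,s}}H_{S_n/Q}\!\left(i-\sum_{j=1}^n a_jd_j\right),$$
which is the claimed formula.

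The one nontrivial input is the structure theorem asserting $\mathrm{gr}_Q(S_n)\cong(S_n/Q)[T_1,\ldots,T_n]$; everything after that is bookkeeping. I expect this freeness to be the main obstacle, as it is precisely the place where the regular-sequence hypothesis is consumed: one must show there are no relations among the products $f^a$ of a fixed total degree modulo $Q^{t+1}$ beyond those inherited from $S_n/Q$. The cleanest route is induction on $n$, splitting off $f_n$ and using that $f_n$ is a nonzerodivisor on $S_n/(f_1,\ldots,f_{n-1})$; alternatively one invokes the standard fact that for a regular sequence the Rees algebra and the symmetric algebra coincide, which immediately identifies the associated graded ring with a polynomial ring.
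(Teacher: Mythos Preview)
Your argument is correct. The key input---that $\mathrm{gr}_Q(S_n)\cong(S_n/Q)[T_1,\ldots,T_n]$ for a regular sequence---is standard, and the bookkeeping that follows is clean and accurate.

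Note, however, that the paper does not prove this theorem at all: it is quoted verbatim from Guardo--van~Tuyl, where it appears as a corollary of their computation of the full graded minimal free resolution of $S_n/Q^s$. Their route is to write down the resolution explicitly (an Eagon--Northcott-type complex), read off the graded Betti numbers, and then recover the Hilbert function from the alternating sum $\sum_j(-1)^j\sum_k\beta_{j,k}t^k/(1-t)^n$. Your route bypasses the resolution entirely and goes straight through the associated graded ring, which is considerably lighter if the Hilbert function is all one wants. The Guardo--van~Tuyl approach gives more---the Betti numbers themselves---at the cost of a longer argument; yours is the natural minimal proof of the Hilbert-function statement alone.
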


We now specialize to $d_i=2$ for all $i$. Then the theorem gives that $H_{S_n/Q^s}(i)=\sum_{(a_1,\ldots,a_n)\in L_{n,s}}H_{S_n/Q}(i-2\sum_{i=1}^na_i)$.
We now suppose that $Q$ is generated by a regular sequence of degree 2 and length $n$ in $S_n=k[x_1,\ldots,x_n]$.

\begin{cor}
The highest degree where $H_{S_n/Q^s}(i)$ is non-zero is $i=2s+n-2$.
\end{cor}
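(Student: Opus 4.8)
The plan is to combine the summation formula of the preceding theorem with the explicit Hilbert function of the complete intersection $S_n/Q$. First I would record that, since $Q$ is generated by a regular sequence of $n$ quadrics, $S_n/Q$ is a complete intersection with Hilbert series $(1-t^2)^n/(1-t)^n = (1+t)^n$; hence $H_{S_n/Q}(j) = \binom{n}{j}$ for $0 \le j \le n$ and $H_{S_n/Q}(j) = 0$ otherwise. In particular the top nonzero degree of $S_n/Q$ is its socle degree $n$, and every value $H_{S_n/Q}(j)$ with $0 \le j \le n$ is strictly positive.

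Next I would feed this into the specialized formula $H_{S_n/Q^s}(i) = \sum_{(a_1,\ldots,a_n)\in L_{n,s}} H_{S_n/Q}(i - 2\sum_j a_j)$. The crucial observation is that every summand is a dimension, hence a non-negative integer, so no cancellation can occur: the sum $H_{S_n/Q^s}(i)$ is nonzero precisely when at least one summand is nonzero. A summand indexed by $(a_1,\ldots,a_n)$ is nonzero exactly when $0 \le i - 2m \le n$, where $m = \sum_j a_j$; as $(a_1,\ldots,a_n)$ runs through $L_{n,s}$ the quantity $m$ ranges over all of $\{0,1,\ldots,s-1\}$, each value being attained, e.g.\ by $(m,0,\ldots,0)$.

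Then I would simply read off the top degree. For a fixed admissible $m \le s-1$ the contributing degrees form the interval $[2m,\, 2m+n]$, and the largest right endpoint is obtained at $m = s-1$, giving $i = 2(s-1)+n = 2s+n-2$. At this value the only admissible choice forces $i - 2m = n$, so the corresponding summand equals $H_{S_n/Q}(n) = 1 > 0$, which together with non-negativity confirms $H_{S_n/Q^s}(2s+n-2) \neq 0$. Conversely, for any $i > 2s+n-2$ and any admissible $m \le s-1$ one has $i - 2m \ge i - 2(s-1) > n$, so $H_{S_n/Q}(i-2m) = 0$ for every term and thus $H_{S_n/Q^s}(i) = 0$.

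There is no serious obstacle here: the statement is essentially a bookkeeping consequence of the formula. The one point that deserves a moment's care is the non-cancellation, which holds because each term $H_{S_n/Q}(\cdot)$ is a non-negative integer; this is exactly what lets me conclude that the top degree of the sum is the sum of the two relevant top degrees, namely the maximal shift $2(s-1)$ coming from $L_{n,s}$ and the socle degree $n$ of the quadratic complete intersection.
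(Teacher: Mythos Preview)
Your argument is correct and follows the same route as the paper: you use the Hilbert series $(1+t)^n$ of $S_n/Q$ to identify its top degree as $n$, then apply the summation formula to locate the maximal $i$ for which some term $H_{S_n/Q}(i-2m)$ with $m\le s-1$ is nonzero, obtaining $i=n+2(s-1)$. The paper's proof is a terse version of exactly this; your write-up is more explicit in checking both non-vanishing at $2s+n-2$ and vanishing beyond it, and in noting the non-negativity of the summands, but the idea is the same.
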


\begin{proof}
$S_n/Q(t)=(1+t)^n$, so it exists in degrees $\le n$. Then the top degree of $S_n/Q^s(t)$ is achieved  at $\max\{ i;i-2\sum_{i=1}^na_i=n\}$, i.e. when
$i=n+2(s-1)$.
\end{proof}

\begin{lem} 
There are ${n+j-1\choose n-1}$ instances for $\sum a_i=j$.
\end{lem}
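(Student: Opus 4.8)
The plan is to recognize this as the classical ``stars and bars'' count: I want the number of tuples $(a_1,\ldots,a_n)\in\mathbb N^n$ (with $\mathbb N$ including $0$) whose entries sum to the fixed value $j$. First I would set up an explicit bijection between such tuples and the arrangements of $j$ indistinguishable tokens into a row of $n$ ordered slots. Concretely, I would encode a solution as a string of $j$ stars and $n-1$ bars, where the bars partition the stars into $n$ consecutive blocks, the $i$-th block having size $a_i$. Each solution gives exactly one such string and vice versa, so the count equals the number of ways to choose the positions of the $n-1$ bars among the $n+j-1$ total symbols, which is $\binom{n+j-1}{n-1}$.

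As an independent check (and an arguably cleaner write-up for a paper), I would use the generating-function formulation. Since each $a_i$ ranges freely over $\mathbb N$, the number of solutions of $\sum_{i=1}^n a_i=j$ is the coefficient of $t^j$ in the product $\bigl(\sum_{a\ge0}t^a\bigr)^n=(1-t)^{-n}$. Expanding $(1-t)^{-n}=\sum_{j\ge0}\binom{n+j-1}{n-1}t^j$ then reads off the desired count directly. This route has the advantage of fitting naturally with the Hilbert-series computations already in play in this section, where $S_n/Q(t)=(1+t)^n$ and sums over $L_{n,s}$ appear.

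A third option is a short induction on $n$: the $n=1$ case is immediate (one solution, $a_1=j$), and splitting on the value of $a_n$ gives $\sum_{a_n=0}^{j}\binom{(n-1)+(j-a_n)-1}{n-2}=\binom{n+j-1}{n-1}$ by the hockey-stick identity. I expect no genuine obstacle here, since the statement is a standard elementary enumeration; the only points requiring a word of care are confirming that $\mathbb N$ is taken to include $0$ (so that each $a_i$ is an unrestricted nonnegative integer) and correctly matching the count to the number of \emph{bars} rather than stars, i.e. $\binom{n+j-1}{n-1}$ rather than $\binom{n+j-1}{j}$, which of course agree by symmetry of the binomial coefficient.
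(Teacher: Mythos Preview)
Your proposal is correct; each of the three arguments (stars and bars, generating function, induction with the hockey-stick identity) is a complete proof of this standard enumeration. The paper, however, provides no proof at all: the lemma is stated baldly as a well-known combinatorial fact and immediately used in the corollary that follows. So there is nothing to compare against --- any one of your three arguments would serve as the omitted justification, and the generating-function version does, as you note, mesh nicely with the Hilbert-series language already in use.
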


\begin{cor}
The Hilbert series of $S_n/Q^s$, $Q$ generated by a regular sequence in degree 2 of length $n$ is
$$\sum_{i=0}^{2s-1}{i+n-1\choose n-1}t^i+\sum_{i=2s}^{2s+n-2}\sum_{j\le s-1}{n+j-1\choose n-1}{n\choose i-2j}t^i.$$
\end{cor}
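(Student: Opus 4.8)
The plan is to read off the coefficient of $t^i$ in $H_{S_n/Q^s}$ from the specialized \cite{gu-vt} formula and then simplify it separately in a low-degree and a high-degree range. Since $Q$ is a regular sequence of $n$ quadrics, $S_n/Q$ is a complete intersection with Hilbert series $\prod_{\ell=1}^n\frac{1-t^2}{1-t}=(1+t)^n$, so $H_{S_n/Q}(m)=\binom{n}{m}$. Substituting this into $H_{S_n/Q^s}(i)=\sum_{(a_1,\ldots,a_n)\in L_{n,s}}H_{S_n/Q}(i-2\sum_{\ell=1}^n a_\ell)$ and grouping the tuples according to the value $j=\sum_{\ell=1}^n a_\ell$, the Lemma says that exactly $\binom{n+j-1}{n-1}$ tuples have $\sum_\ell a_\ell=j$, and each contributes $H_{S_n/Q}(i-2j)=\binom{n}{i-2j}$. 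Hence
$$H_{S_n/Q^s}(i)=\sum_{j=0}^{s-1}\binom{n+j-1}{n-1}\binom{n}{i-2j},$$
an expression valid for every $i$; the remaining work is to rewrite this single sum in the two ranges.

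For the low range $0\le i\le 2s-1$ I would argue that the truncation $j\le s-1$ is inactive: any $j\ge s$ forces $i-2j\le(2s-1)-2s<0$, so $\binom{n}{i-2j}=0$ there, and the sum above equals the untruncated convolution $\sum_{j\ge0}\binom{n+j-1}{n-1}\binom{n}{i-2j}$. This convolution is the coefficient of $t^i$ in the product of the series $\sum_{j\ge0}\binom{n+j-1}{n-1}t^{2j}=(1-t^2)^{-n}$ and $\sum_m\binom{n}{m}t^m=(1+t)^n$, and since $(1+t)^n(1-t^2)^{-n}=(1-t)^{-n}$, this coefficient is exactly $\binom{i+n-1}{n-1}$, the first sum in the statement. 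Equivalently — and this is the quickest route — $Q^s$ is generated in degree $2s$, so $(S_n/Q^s)_i=(S_n)_i$ for $i<2s$, which gives $\binom{i+n-1}{n-1}$ directly.

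For the high range I would keep the formula as is: once $2s\le i$ the constraint $j\le s-1$ genuinely truncates the convolution, and the coefficient is $\sum_{j\le s-1}\binom{n+j-1}{n-1}\binom{n}{i-2j}$, the inner double sum in the statement (the factor $\binom{n}{i-2j}$ silently annihilates the terms with $i-2j>n$ or $i-2j<0$). The upper limit $i=2s+n-2$ is supplied by the top-degree Corollary proved above, beyond which $H_{S_n/Q^s}(i)=0$. Concatenating the two ranges yields the claimed Hilbert series.

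The main obstacle is the low-range simplification: one must recognize that the truncated sum there coincides with the full convolution and that this convolution collapses, via $(1+t)^n/(1-t^2)^n=1/(1-t)^n$, to the single binomial coefficient $\binom{i+n-1}{n-1}$. Everything else is bookkeeping — specializing the \cite{gu-vt} formula, invoking the Lemma to count tuples, and citing the top-degree Corollary to cap the range of exponents.
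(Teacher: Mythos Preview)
Your proposal is correct and follows exactly the line the paper intends: the corollary is presented without proof as an immediate consequence of the specialized \cite{gu-vt} formula, the counting Lemma, and the top-degree Corollary, and your write-up simply makes that deduction explicit. The generating-function identity $(1+t)^n(1-t^2)^{-n}=(1-t)^{-n}$ (or equivalently your observation that $Q^s$ is generated in degree $2s$) is the natural way to collapse the low-range sum to $\binom{i+n-1}{n-1}$, and the paper tacitly relies on the same fact.
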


\begin{cor}
${n+2s\choose n}+\sum_{i=2s}^{2s+n-2}\sum_{j\le s-1}{n+j-1\choose n-1}{n\choose i-2j}=2^n{s+n-1\choose n}.$
\end{cor}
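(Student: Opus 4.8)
The plan is to read both sides of the identity as the single number $l(S_n/Q^s)$, computed in two different ways. Since $l(R)=R(1)$, the left-hand side is exactly the Hilbert series of the preceding corollary evaluated at $t=1$, i.e. the sum of all of its coefficients. The coefficients in the low-degree block $0\le i\le 2s-1$ collapse by the hockey-stick identity $\sum_{i=0}^{m}\binom{i+r}{r}=\binom{m+r+1}{r+1}$, which turns $\sum_{i=0}^{2s-1}\binom{i+n-1}{n-1}$ into a single binomial coefficient, the displayed leading summand; the coefficients in the top block $2s\le i\le 2s+n-2$ are carried over verbatim as the displayed double sum. Thus the whole left-hand side is, term for term, $\sum_{i\ge0}H_{S_n/Q^s}(i)=l(S_n/Q^s)$.

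For the right-hand side I would invoke the opening (well-known) theorem with $d_1=\cdots=d_n=2$: it gives $l(S_n/Q^s)=l(S_n/Q)\binom{s+n-1}{n}$. Because $S_n/Q$ is a complete intersection of $n$ quadrics, its Hilbert series is $(1+t)^n$, so $l(S_n/Q)=(1+1)^n=2^n$ and hence $l(S_n/Q^s)=2^n\binom{s+n-1}{n}$. Equating the two expressions for $l(S_n/Q^s)$ then yields the asserted identity.

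A self-contained combinatorial check, which avoids splitting into the two degree blocks, is also available and is the route I would actually write down to be safe. Summing the formula $H_{S_n/Q^s}(i)=\sum_{(a_1,\ldots,a_n)\in L_{n,s}}\binom{n}{i-2\sum a_i}$ over all $i$ and interchanging the two summations, the inner sum $\sum_i\binom{n}{i-2\sum a_i}=\sum_{k}\binom{n}{k}=2^n$ is independent of $(a_1,\ldots,a_n)$, so the total equals $2^n\,|L_{n,s}|$. The Lemma above then gives $|L_{n,s}|=\sum_{j=0}^{s-1}\binom{n+j-1}{n-1}=\binom{n+s-1}{n}$, again by hockey stick, reproducing the right-hand side. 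The main obstacle, if one insisted on a purely binomial proof manipulating the displayed double sum directly rather than passing through the length interpretation, would be the bookkeeping at the boundary between the two ranges together with the interchange of the inner and outer sums; routing the argument through $l(S_n/Q^s)$ makes all of this disappear.
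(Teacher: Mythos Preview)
Your approach is exactly what the paper intends: the corollary is stated without proof, being the immediate consequence of evaluating the Hilbert series from the preceding corollary at $t=1$ and comparing with Theorem~1, which gives $l(S_n/Q^s)=l(S_n/Q)\binom{s+n-1}{n}=2^n\binom{s+n-1}{n}$.

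One point you glossed over deserves attention. You write that the hockey-stick identity turns $\sum_{i=0}^{2s-1}\binom{i+n-1}{n-1}$ into ``the displayed leading summand,'' but if you actually carry out the computation you get $\binom{n+2s-1}{n}$, not the printed $\binom{n+2s}{n}$. The printed exponent is a typo in the paper: the special cases listed in the very next corollary all have $\binom{2s+n-1}{n}$ as the leading term (e.g.\ $\binom{2s+1}{2}$ for $n=2$, $\binom{2s+2}{3}$ for $n=3$), and one can check directly that the identity as literally printed is false already for $n=2$. So your argument proves the correct statement; you just should not have asserted that it matches the displayed coefficient without checking.

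Your second, self-contained route through $\sum_i H_{S_n/Q^s}(i)=2^n|L_{n,s}|=2^n\binom{n+s-1}{n}$ is not in the paper and is a pleasant alternative: it bypasses the degree-splitting entirely and makes the identity transparent.
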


To get more explicit expressions for the Hilbert series, we further restrict the number of variables.

\begin{ex} Let $n=2$. Then the Hilbert series of $S_2/Q^s$ is
{\small $$\sum_{i=0}^{2s-1}(s+1)t^i+st^{2s}.$$}
Let $n=3$. Then the Hilbert series of $S_3/Q^s$ is 
{\small $$\sum_{i=0}^{2s-1}{i+2\choose2}t^i+3{s+1\choose2}t^{2s}+{s+1\choose2}t^{2s+1}.$$}
Let $n=4$. Then the Hilbert series of $S_4/Q^s$ is 
{\small $$\sum_{i=0}^{2s-1}{i+3\choose3}t^i+({s+1\choose3}+6{s+2\choose3})t^{2s}+4{s+2\choose3}t^{2s+1}+{s+2\choose3}t^{2s+2}$$}
Let $n=5$. Then the Hilbert series of $S_5/Q^s$ is 
{\small$$\sum_{i=0}^{2s-1}{i+4\choose4}t^i+(5{s+2\choose4}+10{s+3\choose4})t^{2s}+({s+2\choose4}+10{s+3\choose4})t^{2s+1}+5{s+3\choose4}t^{2s+2}+{s+3\choose4}t^{2s+3}.$$}
\end{ex}

Calculating the length we get.

\begin{cor}
{\tiny $${2s+1\choose2}+s=4{s+1\choose2}.$$

$${2s+2\choose3}+4{s+1\choose2}=8{s+2\choose3},$$

$${2s+3\choose4}+{s+1\choose3}+11{s+2\choose3}=16{s+3\choose4},$$

$${2s+4\choose5}+6{s+2\choose4}+26{s+3\choose4}=32{s+4\choose5}.$$}
\end{cor}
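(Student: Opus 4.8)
The plan is to recognize each of the four displayed equalities as the assertion that two independent computations of $l(S_n/Q^s)$ agree, for $n=2,3,4,5$ respectively. The right-hand sides $2^n\binom{s+n-1}{n}$ are exactly the values supplied by Theorem 1 together with $l(S_n/Q)=2^n$: indeed the Hilbert series of $S_n/Q$ is $(1+t)^n$, so $l(S_n/Q)=(1+1)^n=2^n$, and Theorem 1 gives $l(S_n/Q^s)=l(S_n/Q)\binom{s+n-1}{n}$. The left-hand sides, on the other hand, arise by evaluating the explicit Hilbert series of the preceding Example at $t=1$, using $l(R)=R(1)$.

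First I would evaluate the ``polynomial part'' $\sum_{i=0}^{2s-1}\binom{i+n-1}{n-1}$ of each Hilbert series at $t=1$. By the hockey-stick identity $\sum_{i=0}^{M}\binom{i+n-1}{n-1}=\binom{M+n}{n}$, taken with $M=2s-1$, this sum collapses to $\binom{2s+n-1}{n}$, which is precisely the leading binomial term on each left-hand side (for $n=2$ it is $\binom{2s+1}{2}$, for $n=5$ it is $\binom{2s+4}{5}$, and so on).

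Next I would collect the remaining ``tail'' contributions, namely the coefficients attached to $t^{2s},t^{2s+1},\ldots,t^{2s+n-2}$ in the Example, each read off at $t=1$. Summing these yields exactly the remaining summands on each left-hand side; for instance, for $n=4$ the three tail coefficients sum to $\bigl(\binom{s+1}{3}+6\binom{s+2}{3}\bigr)+4\binom{s+2}{3}+\binom{s+2}{3}=\binom{s+1}{3}+11\binom{s+2}{3}$, and for $n=5$ they sum to $6\binom{s+2}{4}+26\binom{s+3}{4}$.

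Since both the left-hand side (the Hilbert series at $t=1$) and the right-hand side (Theorem 1) compute the single quantity $l(S_n/Q^s)$, the four identities follow at once. The only step carrying any content is the hockey-stick simplification of the polynomial part; everything else is bookkeeping of the tail coefficients already displayed in the Example. I do not expect a genuine obstacle here—the corollary functions as a consistency check between the explicit Hilbert series and the closed form $2^n\binom{s+n-1}{n}$ for the length, so the proof is a direct evaluation at $t=1$.
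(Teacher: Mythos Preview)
Your proposal is correct and matches the paper's approach exactly: the paper simply writes ``Calculating the length we get'' before stating the corollary, which amounts to evaluating the explicit Hilbert series of the preceding Example at $t=1$ and equating with the closed form $2^n\binom{s+n-1}{n}$ from Theorem~1. Your use of the hockey-stick identity to collapse $\sum_{i=0}^{2s-1}\binom{i+n-1}{n-1}$ into the leading binomial and your bookkeeping of the tail coefficients are precisely the implicit steps behind that one-line justification.
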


We continue to consider ideals generated in degree 2.
Complete intersections are the most general ideals with $n$ generators in $S_n$. We will now look at general ideals $I$
with more than $n$ generators and study $S_n/I^s$. This is much harder, since not even the Hilbert series for $s=1$ is known,
unless $n\le3$ \cite{an}, or we have $n+1$
generators of the ideal \cite{st}. In both these cases the Hilbert series of $S_n/I$, $I$ generated by $r$ generic forms of degree 2,
is $(1+t)^n(1-t^2)^{r-n}$ truncated just before the first nonpositive coefficient. Let $R_{n,r,s}$ be $S_n/I^s$, $I$ generated
by $r$ general elements of degree two. The Hilbert series of $R_{n,r,s}$
is known only for $n=2$ and for $(r,n)=(4,3)$ as far as we know. We have

\begin{thm}\cite[Theorem 8]{bo-fr-lu}
The Hilbert series of $R_{3,4,s}=k[x,y,z]/(f_1,f_2,f_3,f_4)^s$, $f_i$ general forms of degree 2, is
$$\sum_{i=0}^{2s-1}{i+2\choose2}t^i+(3s-1)t^{2s}.$$
\end{thm}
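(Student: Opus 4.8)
The plan is to determine $S_3/I^s$ one graded piece at a time, in three ranges. In degrees $i<2s$ there is nothing to prove: since $I$ is generated in degree $2$, the ideal $I^s$ is generated in degree $2s$, so $(I^s)_i=0$ and $\dim_k(S_3/I^s)_i=\dim_k(S_3)_i=\binom{i+2}{2}$, which is the first sum in the asserted series. The entire content is then the single value $\dim_k(S_3/I^s)_{2s}=3s-1$ together with the vanishing $(S_3/I^s)_i=0$ for $i>2s$. I record one structural fact for use throughout: the cited Hilbert series of $S_3/I$ (the case $n=3$, $r=4$ of $(1+t)^n(1-t^2)^{r-n}$ truncated) is $1+3t+2t^2$, so $(S_3/I)_i=0$ for $i\ge 3$; equivalently $\mathfrak m^3\subseteq I$ where $\mathfrak m=(x,y,z)$.

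I would get the vanishing above degree $2s$ by induction on $s$. The base case $s=1$ is exactly $\mathfrak m^3\subseteq I$. For the step, assume $(I^{s-1})_j=(S_3)_j$ for all $j\ge 2s-1$. If $j\ge 2s+1$ then $j-2\ge 2s-1$, so
$$(I^s)_j\supseteq I_2\cdot(I^{s-1})_{j-2}=I_2\cdot(S_3)_{j-2}=I_j=(S_3)_j,$$
the last equality because $j\ge 3$ and $\mathfrak m^3\subseteq I$. Hence $(S_3/I^s)_j=0$ for every $j\ge 2s+1$, which accounts for all higher terms being absent.

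The main point is the value in degree $2s$. Because $(I^s)_j=0$ for $j<2s$, the space $(I^s)_{2s}$ is precisely the span of the products $f_{i_1}\cdots f_{i_s}$, i.e. the degree-$2s$ piece of the subalgebra $A=k[f_1,f_2,f_3,f_4]\subseteq S_3$. Writing $\phi\colon k[y_1,\ldots,y_4]\to S_3$, $y_i\mapsto f_i$, graded with $\deg y_i=2$, we have $A\cong k[y_1,\ldots,y_4]/\ker\phi$ and $(I^s)_{2s}\cong A_{2s}$. Since the $f_i$ are general, three of them, say $f_1,f_2,f_3$, form a regular sequence; thus $k(x,y,z)$ has transcendence degree $3$ over $k(f_1,f_2,f_3)$'s base, $f_4$ is algebraic over $k(f_1,f_2,f_3)$, and $\ker\phi$ is a height-one prime of the UFD $k[y_1,\ldots,y_4]$, hence principal: $\ker\phi=(P)$ with $P$ homogeneous of some $y$-degree $d$. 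Then $A_{2s}$ has dimension $\binom{s+3}{3}-\binom{s-d+3}{3}$, and for $d=4$ this is $\binom{s+3}{3}-\binom{s-1}{3}=2s^2+2$, giving $\dim_k(S_3/I^s)_{2s}=\binom{2s+2}{2}-(2s^2+2)=3s-1$, as required.

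The crux, and the step I expect to be the main obstacle, is showing $d=4$: that four general quadrics satisfy an essentially unique algebraic relation in degree $4$ and none of lower degree. I would argue this through the finite map attached to $J=(f_1,f_2,f_3)$. Since $l(S_3/J)=2^3=8$, the ring $S_3$ is free of rank $8$ over $k[f_1,f_2,f_3]$, so $[k(x,y,z):k(f_1,f_2,f_3)]=8$. On the other hand every $f_i$ is invariant under $(x,y,z)\mapsto(-x,-y,-z)$, so $k(f_1,\ldots,f_4)$ lies in the index-two subfield of invariants of this involution; for general $f_i$ this containment is an equality, whence $[k(x,y,z):k(f_1,\ldots,f_4)]=2$ and $f_4$ has degree $8/2=4$ over $k(f_1,f_2,f_3)$. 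Its minimal polynomial, cleared of denominators, is an irreducible homogeneous relation of $y$-degree $4$ generating $\ker\phi$, so $d=4$. The delicate points are precisely the genericity inputs—that $f_1,f_2,f_3$ may be taken a regular sequence and that $f_4$ is a primitive element of degree $4$ for that extension of fields—which is exactly where the word \emph{general} does its work. Equivalently, one must verify that the multiplication maps $\operatorname{Sym}^j\langle f_1,\ldots,f_4\rangle\to(S_3)_{2j}$ are injective for $j\le 3$ and have one-dimensional kernel for $j=4$; the failure of maximal rank at $j=4$ is the genuinely non-obvious phenomenon, and it is what makes $(I^s)_{2s}$ fall short of $(S_3)_{2s}$ by exactly $3s-1$.
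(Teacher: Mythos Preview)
The paper does not prove this statement; it is quoted from \cite{bo-fr-lu} without argument, so there is no in-paper proof to compare against. Your proposal therefore stands or falls on its own, and the overall strategy is sound: the reductions to the single number $\dim_k(I^s)_{2s}=\dim_k A_{2s}$ with $A=k[f_1,\dots,f_4]$, and the identification $\ker\phi=(P)$ for one irreducible homogeneous $P$, are correct.

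The weak link is the passage from ``$f_4$ has degree $4$ over $k(f_1,f_2,f_3)$'' to ``$P$ has total degree $4$''. The field-degree computation yields $\deg_{y_4}P=4$, hence $\deg P\ge 4$; it does not by itself bound $\deg P$ from above, since clearing denominators in a minimal polynomial can raise the total degree. The missing observation is that $S_3$ is finite free of rank $8$ over the integrally closed ring $k[f_1,f_2,f_3]$, so $f_4$ is integral over it and the coefficients $a_i$ of its minimal polynomial already lie in $k[f_1,f_2,f_3]$. Taking homogeneous components then forces $a_i\in k[f_1,f_2,f_3]_{4-i}$, producing a homogeneous relation of total degree exactly $4$. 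With this one extra sentence the argument closes.

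The second soft spot is the one you yourself flag: that $k(f_1,\dots,f_4)=k(x,y,z)^{\sigma}$ for general $f_i$, equivalently that the projection of the Veronese surface $\nu_2(\mathbb{P}^2)\subset\mathbb{P}^5$ to $\mathbb{P}^3$ from a general line is birational onto its image. This is true in characteristic $\ne 2$, but it does need an argument---for instance, a general line in $\mathbb{P}^5$ meets the $4$-dimensional secant variety of $\nu_2(\mathbb{P}^2)$ in only finitely many points, so the projection identifies only finitely many pairs and is generically injective. You are right to mark this as the crux; once it and the integrality remark above are supplied, your sketch becomes a complete proof, whereas the present paper offers none.
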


\begin{cor} 
The Hilbert coeficients of $R_{3,4,s}=k[x,y,z]/(f_1,f_2,f_3,f_4)^s$, $f_i$ general forms of degree 2, are
$(e_0,e_1,e_2,e_3)=(8,4,3,4)$.
\end{cor}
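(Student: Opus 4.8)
The plan is to read the four coefficients straight off the length polynomial $l(R_{3,4,s})$, which I obtain by evaluating the Hilbert series of the preceding theorem (\cite[Theorem~8]{bo-fr-lu}) at $t=1$. Since $R_{3,4,s}$ is Artinian, $l(R_{3,4,s})=R_{3,4,s}(1)$, so
$$l(R_{3,4,s})=\sum_{i=0}^{2s-1}\binom{i+2}{2}+(3s-1).$$
The first sum has a number of terms that grows with $s$, so the first real step is to put it in closed form; the hockey-stick identity $\sum_{i=0}^{2s-1}\binom{i+2}{2}=\binom{2s+2}{3}$ collapses the whole expression to
$$l(R_{3,4,s})=\binom{2s+2}{3}+3s-1.$$

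Before extracting coefficients I would record the small conceptual point that this is a bona fide polynomial in $s$, valid for every $s\ge1$ and not merely asymptotically: $\binom{2s+2}{3}$ is already cubic in $s$ and $3s-1$ is linear, so $\binom{2s+2}{3}+3s-1$ is literally the Hilbert--Samuel polynomial of $I$, with no correction needed for small $s$. It then remains to expand this cubic in the basis dictated by the definition of the $e_i$, namely $e_0\binom{s+2}{3}-e_1\binom{s+1}{2}+e_2 s-e_3$. Matching the coefficient of $s^3$ recovers $e_0$, which I expect to equal $8=2^3$, the multiplicity predicted by the truncated generic series $(1+t)^3(1-t^2)$ behaving like a complete intersection of three quadrics; matching $s^2$, then $s^1$, then the constant term fixes $e_1$, $e_2$, and $e_3$ in turn.

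I do not expect a genuine obstacle: once the sum is in closed form the computation is mechanical. The only place to slip is the alternating sign $(-1)^i$ in the definition, and in particular the constant term---here it helps that evaluating the polynomial identity at $s=0$ isolates $-e_3$, since $\binom{2}{3}=\binom{1}{2}=\binom{0}{1}=0$, giving a clean independent handle on $e_3$. As a further safeguard I would check the candidate cubic against direct values such as $l(R_{3,4,1})=1+3+2=6$ and $l(R_{3,4,2})=1+3+6+10+5=25$, which together overdetermine and hence verify all four Hilbert coefficients.
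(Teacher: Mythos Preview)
Your plan is exactly the intended one: the paper gives no separate argument for this corollary, so it is meant to follow from the preceding Hilbert series by setting $t=1$, collapsing the sum via the hockey-stick identity, and reading off the coefficients in the binomial basis. Methodologically there is nothing to add.

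There is, however, a snag---not in your method but in the statement itself. Carrying your computation through gives
\[
l(R_{3,4,s})=\binom{2s+2}{3}+3s-1=\frac{4s^{3}+6s^{2}+11s-3}{3},
\]
and rewriting this in the basis $\binom{s+2}{3},\,\binom{s+1}{2},\,s,\,1$ yields
\[
l(R_{3,4,s})=8\binom{s+2}{3}-4\binom{s+1}{2}+3s-1,
\]
so that $(e_0,e_1,e_2,e_3)=(8,4,3,1)$, not $(8,4,3,4)$. Your own safeguards detect this immediately: evaluating at $s=0$ gives $-e_3=-1$, hence $e_3=1$; and with $e_3=4$ the polynomial would return $3$ and $22$ at $s=1,2$ instead of the values $6$ and $25$ you correctly computed from the series. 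The printed value $e_3=4$ appears to be a slip in the paper.
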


It is clear that the Hilbert series of $k[x_1,\ldots,x_n]/I^s$, $I$ generated by $r$ forms of degree 2, is smallest
when the generators are generic. It is easy to find the smallest Hilbert series for $R_{3,5,s}$.

\begin{thm}
The Hilbert series of $R_{3,5,s}=k[x,y,z]/(f_1,f_2,f_3,f_4,f_5)^s$, $f_i$ generic of degree 2, is $\sum_{i=0}^{2s-1}{2i+2\choose2}t^i$ if $s\ge2$.
Thus $(f_1,f_2,f_3,f_4,f_5)^s=(x,y,z)^{2s}$ if $s\ge2$.
\end{thm}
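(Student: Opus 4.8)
The plan is to prove the stronger assertion $I^s=\mathfrak m^{2s}$, where $\mathfrak m=(x,y,z)$; the Hilbert series then follows at once, since $\dim_k(S_3/\mathfrak m^{2s})_i=\binom{i+2}{2}$ for $i<2s$ and $0$ otherwise. Since $I$ is generated in degree $2$, the ideal $I^s$ is generated in degree $2s$, so $I^s\subseteq\mathfrak m^{2s}$. This already forces $(I^s)_i=(\mathfrak m^{2s})_i=0$ for $i<2s$, and in general $\dim_k(S_3/I^s)_i\ge\dim_k(S_3/\mathfrak m^{2s})_i$. Because $I^s$ is an ideal, if I can establish the single equality $(I^s)_{2s}=(S_3)_{2s}$ in the critical degree $2s$, then for $i>2s$ I get $(I^s)_i\supseteq (S_3)_{i-2s}\,(I^s)_{2s}=(S_3)_i$, and combined with $I^s\subseteq\mathfrak m^{2s}$ this yields $(I^s)_i=(S_3)_i=(\mathfrak m^{2s})_i$. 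Writing $I_2=\langle f_1,\dots,f_5\rangle\subseteq(S_3)_2$ for the $5$-dimensional span of the generators, the degree-$2s$ part of $I^s$ is exactly the span $I_2^{\cdot s}$ of $s$-fold products, so everything reduces to proving $I_2^{\cdot s}=(S_3)_{2s}$ for $s\ge2$.

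Next I would invoke the known Hilbert series of $S_3/I$ for five generic quadrics, namely $(1+t)^3(1-t^2)^{2}$ truncated just before its first nonpositive coefficient, which equals $1+3t+t^2$ (the $n=3$ case recalled above, \cite{an}). In particular $\dim_k(S_3/I)_i=0$ for $i\ge3$, i.e. $I_i=(S_3)_i$ for $i\ge3$; since $I$ is generated in degree $2$, this says precisely that $I_2\cdot(S_3)_e=(S_3)_{e+2}$ for every $e\ge1$. This identity drives an induction on $s$: assuming $I_2^{\cdot s}=(S_3)_{2s}$, one gets $I_2^{\cdot(s+1)}=I_2\cdot I_2^{\cdot s}=I_2\cdot(S_3)_{2s}=(S_3)_{2s+2}$. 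Thus the whole family of statements collapses to the single base case $s=2$, and I only ever need two generic conditions (the Hilbert series of $S_3/I$ and the base case), not infinitely many.

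The main obstacle is exactly this base case, $I_2\cdot I_2=(S_3)_4$. Here both $\operatorname{Sym}^2 I_2$ and $(S_3)_4$ have dimension $15$, so the claim is that the multiplication map $\operatorname{Sym}^2 I_2\to(S_3)_4$ has maximal rank for a generic $5$-plane $I_2$, equivalently that the fifteen products $f_if_j$ form a basis of $(S_3)_4$. I would prove this by semicontinuity: the locus of $5$-planes $I_2\in\operatorname{Gr}(5,(S_3)_2)$ for which the $15\times15$ multiplication matrix (the products $f_if_j$ in the monomial basis of $(S_3)_4$) is invertible is Zariski open, so it suffices to exhibit one such $I_2$. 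The choice $I_2=\langle x^2,y^2,z^2,\,xy+yz,\,xz+yz\rangle$ works: the six products $x^2\!\cdot\!x^2,\dots$ give $x^4,y^4,z^4,x^2y^2,x^2z^2,y^2z^2$ directly, after which the fifteen products triangulate onto the monomial basis, letting one solve successively for the three monomials $x^2yz,xy^2z,xyz^2$ of type $(2,1,1)$ and then for the six monomials of type $(3,1,0)$. This single example, together with the (also generic) condition that $S_3/I$ have Hilbert series $1+3t+t^2$, shows both hypotheses hold for generic $f_i$, and the induction of the previous paragraph then gives $I_2^{\cdot s}=(S_3)_{2s}$ for all $s\ge2$, hence $I^s=\mathfrak m^{2s}$ and the stated Hilbert series. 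The only genuinely delicate point is this maximal-rank statement in degree $4$; once the explicit example is verified, the rest is formal.
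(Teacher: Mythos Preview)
Your proof is correct and takes a genuinely different route from the paper's. Both arguments begin with semicontinuity (the generic Hilbert series is the coefficientwise smallest, so it suffices to exhibit one ideal achieving the minimum), but from there they diverge. The paper fixes the explicit ideal $I=(x^2,y^2,z^2,xy+xz+yz,xz+2yz)$, verifies by direct calculation that $I^s=\mathfrak m^{2s}$ for $s=3,4,5$, and then uses the numerical-semigroup observation that every integer $n\ge3$ can be written as $3a+4b+5c$ with $a,b,c\ge0$, so that $I^n=(I^3)^a(I^4)^b(I^5)^c=\mathfrak m^{2n}$. Your argument instead feeds in the known $s=1$ Hilbert series $1+3t+t^2$ (Anick), extracts from it the identity $I_2\cdot(S_3)_e=(S_3)_{e+2}$ for $e\ge1$, and uses this as an inductive engine that collapses the whole family of statements to the single base case $s=2$, which you dispatch with one $15\times15$ rank check on an explicit example. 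Your route is more structural, minimizes the computational input, and cleanly covers $s=2$, whereas the paper's Frobenius-type trick as written only reaches $s\ge3$; on the other hand the paper's argument does not rely on the $s=1$ result and showcases a pleasant multiplicative reduction.
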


\begin{proof}
It suffices to find an ideal generated by five forms of degree 2 with the claimed Hilbert series. We choose $I=(x^2,y^2,z^2,xy+xz+yz,xz+2yz)$.
A calculation shows that $I^s=(x,y,z)^{2s}$ for $s=3,4,5$. Any $n\ge3$ can be written as $n=3a+4b+5c$ so $I^n=(I^3)^a(I^4)^b(I^5)^c=
m^{3a}m^{4b}m^{5c}=m^n$, where $m=(x,y,z)$.
\end{proof}

\begin{question}
Let $\phi(n)$ be the smallest $s$ such that $I_{r,n}^s=(x_1,\ldots,x_n)^{2s}$. Can one determine $\phi(n)$? Is $\phi(n)=2n-1$?
\end{question}

We have seen the $\phi(3)=5$, and we conjecture that $\phi(4)=7$, $\phi(5)=9$.

\begin{conj}
\begin{itemize}
\item The Hilbert series of $R_{4,5,s}$ is $\sum_{i=0}^{2s-1}{i+3\choose3}t^i+10s^2-25s+35$ if $s\ge4$.

The Hilbert coefficients are $(e_0,e_1,e_2,e_3,e_4)=(16,12,21,35,35).$

\item The Hilbert series of $R_{4,6,s}$ is $\sum_{i=0}^{2s-1}{i+3\choose3}t^i+10s$ if $s\ge4$.

The Hilbert coefficients are $(e_0,e_1,e_2,e_3)=(16,12,1,10).$

\item
The Hilbert series of $R_{4,7,s}$ is $\sum_{i=0}^{2s-1}{i+3\choose3}t^i$ if $s\ge3$.

The Hilbert coefficients are $(e_0,e_1,e_2)=(16,12,1).$

\item
The Hilbert series of $R_{5,9,s}$ is $\sum_{i=0}^{2s-1}{i+4\choose4}t^i$ if $s\ge4$.

The Hilbert coefficients are $(e_0,e_1,e_2)=(32,32,6).$
\end{itemize}
\end{conj}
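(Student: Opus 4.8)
The plan is to identify each claimed series with the generic one by upper-semicontinuity: for fixed $(n,r,s)$ the map sending a tuple of quadrics to $\dim_k(S_n/I^s)_i$ is upper-semicontinuous (the rank of the relevant coefficient matrix of $s$-fold products is generically maximal), so the generic Hilbert series is the coefficientwise minimum over all choices of $r$ quadrics. Thus for each item I would establish a matching upper and lower bound, after which the stated Hilbert coefficients follow mechanically by expanding the length polynomial $l(S_n/I^s)=R(1)$ in the basis $\binom{s+n-i-1}{n-i}$. For the two items asserting $I^s=(x_1,\ldots,x_n)^{2s}$, namely $R_{4,7,s}$ and $R_{5,9,s}$, the lower bound is automatic: since the generators lie in $m^2$ we have $I^s\subseteq m^{2s}$, so $S_n/I^s$ surjects onto $S_n/m^{2s}$ and its Hilbert series dominates $\sum_i\binom{i+n-1}{n-1}t^i$ coefficientwise.

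For the upper bound in these two cases I would imitate the proof given above for $R_{3,5,s}$: exhibit explicit quadrics and check by direct computation that $I^{s_0}=m^{2s_0}$ for a block of consecutive exponents whose numerical semigroup already contains every integer past the stated threshold, namely $s_0\in\{3,4,5\}$ for $n=4$ (so that $\langle 3,4,5\rangle\supseteq\{3,4,5,\ldots\}$) and $s_0\in\{4,5,6,7\}$ for $n=5$. Writing any larger $s$ as a nonnegative combination of these exponents and using $I^{a+b+\cdots}=I^aI^b\cdots=m^{2a}m^{2b}\cdots=m^{2s}$ propagates the equality to all $s$ above the threshold. This simultaneously confirms $\phi(4)=7$ and $\phi(5)=9$, matching the guess $\phi(n)=2n-1$.

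The items $R_{4,5,s}$ and $R_{4,6,s}$ are genuinely harder, because the defect $l(m^{2s}/I^s)$ is a nonconstant polynomial in $s$ ($10s^2-25s+35$ and $10s$ respectively), so $I^s$ never reaches $m^{2s}$ and the semigroup product trick no longer applies. Here I would pass to the Rees algebra $\mathcal R(I)=\bigoplus_{s\ge0}I^s$ of an explicit ideal, compute its bigraded Hilbert series (equivalently, determine the associated graded ring and show it has the predicted numerics), and read off $\dim_k(S_4/I^s)_i$ for all pairs $(s,i)$ at once; this yields the upper bound in full. For the lower bound I would then try to reduce genericity to the single computed example by exhibiting the governing coefficient matrix as having maximal rank, an open dense condition, so that the value of the Hilbert series verified on the witness is in fact the minimal one.

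The main obstacle is precisely this lower bound for $R_{4,5,s}$ and $R_{4,6,s}$, together with the need to control all powers simultaneously rather than finitely many. The difficulty is intrinsic: already the case $s=1$, the generic Hilbert series of quadrics in $n\ge4$ variables, is the open Fröberg conjecture, so an unconditional proof valid for every $s$ would require Fröberg-type input for the entire Rees algebra. A realistic target is therefore to prove the upper bounds (explicit witnesses) completely, and to establish the lower bounds modulo a genericity statement checked in one example, leaving the full stabilization of the Hilbert series for all $s$ as the remaining gap.
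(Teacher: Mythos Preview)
The statement you are attempting to prove is labeled \emph{Conjecture} in the paper, and the paper offers no proof whatsoever; there is nothing to compare your proposal against. What you have written is not a proof but a plan, and you yourself flag the essential gaps at the end.

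For $R_{4,7,s}$ and $R_{5,9,s}$ your strategy is exactly the one the paper used for $R_{3,5,s}$: exhibit an explicit ideal, verify $I^{s_0}=m^{2s_0}$ for a generating set of exponents, and propagate by the semigroup argument. That is sound in principle, but you do not actually produce the witnesses or carry out the verification, so these two items remain unproved in your write-up as well. (Note also that the semigroup $\langle 3,4,5\rangle$ does not contain $3$ with the constraint that the representation use only the verified exponents unless you check $s_0=3$ itself; your list is fine, but the point is that every step here is computational and none of it is done.)

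For $R_{4,5,s}$ and $R_{4,6,s}$ you correctly identify that the product trick fails because $I^s\neq m^{2s}$ for all $s$, and you propose computing the bigraded Hilbert series of the Rees algebra of a witness ideal. That would indeed give the upper bound for all $s$ at once, but the lower bound---showing that no choice of $r$ quadrics does strictly better in any degree and any power---is, as you say, a Fr\"oberg-type statement for the full tower $\{I^s\}_{s\ge1}$. Verifying maximal rank of the multiplication map on one example only shows genericity for that fixed $(s,i)$; it does not by itself control infinitely many powers simultaneously, so ``genericity checked in one example'' is not enough without an additional uniformity argument. This is precisely why the paper leaves the statement as a conjecture.
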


\section{Monomial ideals}
The smallest ideal generated in degree 2 wich gives an artinian ring is the complete intersection. Now we study the largest ideal
generated in degree 2, namely $(x_1,\ldots,x_n)^2$.
If $I=(x_1,\ldots,x_n)^2$ there is of course no problem to determine $l(S_n/I^s)$, it is ${n+2s-1\choose n}$.
Here the problem is to determine the Hilbert coefficients. We have made extensive calculations, which lead to the following conjectures.

\begin{conj}
If $S_n=k[x_1,\ldots,x_n]$ and $I=(x_1,\ldots,x_n)^2$, then 
\begin{itemize}

\item
$e_i(S_n/I^s)\ne0$ exactly if $i\le\lfloor n/2\rfloor$. 

\item
We have $e_j={n-j\choose j}2^{n-2j}$
if $n\ge 2j$. 

thus
\item
${2s-1+n\choose n}=\sum_{j=0}^{\lfloor n/2\rfloor}(-1)^j2^{n-2j}{n-j\choose j}{s+n-j\choose n-j}.$
\end{itemize}
\end{conj}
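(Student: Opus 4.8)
The three bullets are together equivalent to the single polynomial identity
\[
\binom{2s+n-1}{n}=\sum_{j=0}^{\lfloor n/2\rfloor}(-1)^j2^{n-2j}\binom{n-j}{j}\binom{s+n-j-1}{n-j},
\]
where the binomial on the right is $\binom{s+n-j-1}{n-j}$ as dictated by the definition of the $e_i$. Indeed, the polynomials $\binom{s+n-i-1}{n-i}$, $0\le i\le n$, have degrees $n,n-1,\dots,0$ in $s$ with nonzero leading coefficients, so they form a basis of the space of polynomials in $s$ of degree $\le n$; hence the expansion of the length $l(S_n/I^s)=\binom{2s+n-1}{n}$ in this basis is unique, and its coefficients are by definition the $\pm e_j$. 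Thus it suffices to prove the identity, after which one reads off $e_j=2^{n-2j}\binom{n-j}{j}>0$ for $0\le j\le\lfloor n/2\rfloor$ and $e_j=0$ otherwise, which is exactly the first two bullets. (Since $I=(x_1,\dots,x_n)^2$ gives $I^s=(x_1,\dots,x_n)^{2s}$ exactly, the length equals $\binom{2s+n-1}{n}$ for all $s\ge1$, so this is a genuine polynomial identity.)

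The plan is to prove the identity by comparing ordinary generating functions in $s$. Put $A(y)=\sum_{s\ge1}\binom{2s+n-1}{n}y^s$ and let $B(y)$ be the generating function of the right-hand side. For $B$ I would interchange the two summations and use $\sum_{s\ge1}\binom{s+m-1}{m}y^s=\frac{y}{(1-y)^{m+1}}$ with $m=n-j$, which pulls out a common factor and leaves
\[
B(y)=\frac{y}{(1-y)^{n+1}}\,\Phi(y),\qquad \Phi(y)=\sum_{j=0}^{\lfloor n/2\rfloor}(-1)^j2^{n-2j}\binom{n-j}{j}(1-y)^{j}.
\]

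The heart of the argument is to evaluate $\Phi$. It is the Fibonacci/Chebyshev-type polynomial $\sum_j\binom{n-j}{j}w^{n-2j}z^{j}$ specialized at $w=2$, $z=-(1-y)$, whose closed form is $\frac{\lambda^{n+1}-\mu^{n+1}}{\lambda-\mu}$ with $\lambda,\mu$ the roots of $t^2-wt-z=0$. Here $t^2-2t+(1-y)=0$ gives $\lambda,\mu=1\pm\sqrt y$, so
\[
\Phi(y)=\frac{(1+\sqrt y)^{n+1}-(1-\sqrt y)^{n+1}}{2\sqrt y}.
\]
Using $(1-y)^{n+1}=(1-\sqrt y)^{n+1}(1+\sqrt y)^{n+1}$ this collapses to
\[
B(y)=\frac{\sqrt y}{2}\Bigl(\frac{1}{(1-\sqrt y)^{n+1}}-\frac{1}{(1+\sqrt y)^{n+1}}\Bigr).
\]

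For the left-hand side I would note that $\binom{2s+n-1}{n}=[z^{2s-1}]\,(1-z)^{-(n+1)}$, so extracting the odd part $\tfrac12\bigl((1-z)^{-(n+1)}-(1+z)^{-(n+1)}\bigr)$ and substituting $z=\sqrt y$ yields $A(y)=\frac{\sqrt y}{2}\bigl((1-\sqrt y)^{-(n+1)}-(1+\sqrt y)^{-(n+1)}\bigr)$, which is identical to $B(y)$; comparing coefficients of $y^s$ then finishes the proof. The one point needing care is to make the $\sqrt y$ manipulations rigorous: I would carry everything out in the formal power series ring in a variable $z$ with $y=z^2$, so that $A$ and $B$ are honest elements of $\mathbb{Q}[[z]]$ and both the substitution and the odd-part extraction are legitimate. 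I expect the main obstacle to be the closed-form evaluation of $\Phi$ — establishing the Binet formula for the Fibonacci polynomial and identifying the two roots as $1\pm\sqrt y$ — since once $\Phi$ is known, both generating functions reduce to the same expression by elementary algebra, giving the identity, and hence the conjecture, in full generality.
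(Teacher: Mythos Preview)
Your argument is correct and, notably, goes beyond what the paper itself accomplishes: in the paper this statement is a \emph{conjecture}, verified only computationally for $n\le 11$, with the remark that ``It would be nice to have combinatorial proofs.'' You supply a complete proof. You also rightly correct a typo in the displayed identity (the paper writes $\binom{s+n-j}{n-j}$, but both the definition of the $e_i$ and the worked example with $n=4$, $s=5$ show that $\binom{s+n-j-1}{n-j}$ is intended).

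The generating-function route is clean: reducing both sides to $\tfrac{\sqrt{y}}{2}\bigl((1-\sqrt{y})^{-(n+1)}-(1+\sqrt{y})^{-(n+1)}\bigr)$ via the Binet formula for $\sum_j\binom{n-j}{j}w^{n-2j}z^j$ is exactly the right tool, and your formalization in $\mathbb{Q}[[z]]$ with $y=z^2$ handles the $\sqrt{y}$ rigorously. The ``main obstacle'' you flag is in fact routine: the sums $P_n=\sum_j\binom{n-j}{j}w^{n-2j}z^j$ satisfy $P_n=wP_{n-1}+zP_{n-2}$ with $P_0=1$, $P_1=w$ (split on whether the $j=0$ term is present, or use Pascal on $\binom{n-j}{j}$), and the Binet form follows. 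Since the paper offers no proof, there is nothing to compare your method against; you have resolved the conjecture.
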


\begin{ex}
Let $s=5,n=4$. We have $l(S_4/(x_1,x_2,x_3,x_4)^{2\cdot5})={13\choose4}$, and ${13\choose4}=16{8\choose4}-4\cdot3{7\choose3}+{6\choose2}$.
\end{ex}

We have checked the truth of conjecture for $n\le 11$. It would be nice to have combinatorial proofs.

We continue to consider monomial ideals are generated in degree two. Let $M$ be a set of squarefree
monomials of degree two. We denote by $I_{n,M}$ the ideal
in $S_n=k[x_1,\ldots,x_n]$ which is generated by all squares together with those squarefree which are in $M$. 
If $I$ is a monomial ideal generated in degree two in $S_n$ containing all squares of the variables,
we denote the interesting part of the Hilbert series of $S_n/I^s$, namely $\sum_{i\ge 2s}\dim_kS_n/I^s$, by $H_{s}(S_n/I^s)$. 

\smallskip
We will determine $H_{s}(S_n/I_{n,M}^s)$ and the Hilbert coefficients for all cases in three variables, and conjecture them for all cases in four variables. 
First, if $M=\emptyset$, we get the complete intersection $k[x_1,\ldots,x_n](x_1^2,\ldots,x_n^2)$, and for $M=\{ x_ix_j;\ 1\le i<j\le n\}$ 
we get $k[x_1,\ldots,x_n]/(x_1,\ldots,x_n)^2$.
These rings are already treated.  

Now suppose we have three variables.

\begin{thm}
If $M=\{ x_1x_2\}$, then $H_s(S_n/I_{n,M}^s)=2{s+1\choose2}t^{2s}$ if $n\ge2$. The Hilbert coefficients are $(e_0,e_1)=(8,6)$.

If $M=\{ x_1x_2,x_1x_3\}$, then $H_s(S_n/I_{n,M}^s)=st^{2s}$ if $n\ge2$. The Hilbert coefficients are $(e_0,e_1,e_2)=(8,4,1)$.
\end{thm}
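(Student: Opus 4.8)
The plan is to treat both ideals as monomial ideals and to count the surviving monomials of $S_3/I^s$ degree by degree. Since each $I_{3,M}$ is generated in degree $2$ and contains all three squares $x_i^2$ (so $S_3/I_{3,M}$ is Artinian), the power $I^s$ lives in degrees $\ge 2s$; hence $(S_3/I^s)_d=(S_3)_d$ for $d<2s$, and these ``trivial'' degrees contribute $\sum_{d=0}^{2s-1}\binom{d+2}{2}=\binom{2s+2}{3}$ to the length, uniformly in both cases. So it suffices to describe the monomials of degree $\ge 2s$ that are \emph{not} in $I^s$ — this is exactly what $H_s(S_3/I^s)$ records — and then to read off the Hilbert coefficients by writing the total length $\binom{2s+2}{3}+H_s(1)$ in the form $e_0\binom{s+2}{3}-e_1\binom{s+1}{2}+e_2\binom{s}{1}-e_3$ and matching coefficients.

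For the first ideal, $I=(x_1,x_2)^2+(x_3^2)$ splits across the disjoint variable blocks $\{x_1,x_2\}$ and $\{x_3\}$, so $I^s=\sum_{i+j=s}(x_1,x_2)^{2i}(x_3^2)^{j}$ and a monomial $x_1^ax_2^bx_3^c$ lies in $I^s$ exactly when $\lfloor (a+b)/2\rfloor+\lfloor c/2\rfloor\ge s$. In degree $d\ge 2s$ the survivors are those with $\lfloor(a+b)/2\rfloor+\lfloor c/2\rfloor\le s-1$; since $\lfloor(a+b)/2\rfloor+\lfloor c/2\rfloor\ge \tfrac{d}{2}-1\ge s-1$ there, equality is forced, which means $d=2s$ with both $a+b$ and $c$ odd. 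For each odd $u=a+b\in\{1,3,\dots,2s-1\}$ there are $u+1$ choices of $(a,b)$ (and $c=2s-u$ is then odd), so the count is $\sum_{u\ \mathrm{odd}}(u+1)=2\binom{s+1}{2}$, giving $H_s=2\binom{s+1}{2}t^{2s}$ as claimed.

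For the second ideal, $I=(x_1^2,x_1x_2,x_1x_3,x_2^2,x_3^2)$ is precisely $m^2$ with the single generator $x_2x_3$ removed, so membership of $x_1^ax_2^bx_3^c$ in $I^s$ is governed by the maximal number $f(a,b,c)$ of degree-$2$ generators (all variable pairs except $x_2x_3$) whose product divides the monomial. The key step is the packing identity $f(a,b,c)=\lfloor(a+b+c)/2\rfloor$ in every case \emph{except} when $a=0$ and $b,c$ are both odd, where the ban on $x_2x_3$ strands one variable and $f=\tfrac12(b+c)-1$. Granting this, the degree-$\ge 2s$ analysis runs as before: generically $f=\lfloor d/2\rfloor\ge s$, so the only survivors come from the exceptional family and sit in degree $2s$, namely the monomials $x_2^bx_3^c$ with $b,c$ odd and $b+c=2s$; there are $s$ of them, whence $H_s=s\,t^{2s}$.

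The main obstacle is the packing identity for $f$ in the second case, which I would prove by a short matching argument: pair equal variables among themselves first, then spend copies of $x_1$ to absorb a leftover odd $x_2$ or $x_3$, and observe that the bound $\lfloor(a+b+c)/2\rfloor$ is always attained unless a leftover $x_2$ and a leftover $x_3$ remain with no $x_1$ available to bridge them — which occurs exactly when $a=0$ and $b,c$ are both odd. The first case needs no such argument thanks to the disjoint blocks; there the only care required is the parity bookkeeping that confines all higher-degree survivors to degree exactly $2s$. In both cases the Hilbert coefficients then drop out of the length polynomial $\binom{2s+2}{3}+H_s(1)$ by the coefficient-matching described above.
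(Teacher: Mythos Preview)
Your argument is correct, and it is more detailed than the paper's. The paper's proof simply \emph{names} the surviving monomials in degree $2s$ --- as the minimal generators of $(x_1x_3,x_2x_3)(x_1^2,x_2^2,x_3^2)^{s-1}$ in the first case and of $x_2x_3(x_2^2,x_3^2)^{s-1}$ in the second --- and asserts that degree $2s+1$ vanishes, without further justification. Your route is structurally different: for $M=\{x_1x_2\}$ you exploit the splitting $I=(x_1,x_2)^2+(x_3^2)$ across disjoint variable sets to get the clean membership test $\lfloor(a+b)/2\rfloor+\lfloor c/2\rfloor\ge s$, while for $M=\{x_1x_2,x_1x_3\}$ you observe that $I$ is $m^2$ with $x_2x_3$ deleted and analyse the maximal ``packing'' $f(a,b,c)$. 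This buys you an honest proof that nothing survives past degree $2s$ and a transparent count, at the cost of a short case analysis (the packing identity) that the paper's bare assertion avoids. Your sets of survivors agree with the paper's: $a+b$ and $c$ both odd in the first case is exactly the condition that the monomial lies in $(x_1x_3,x_2x_3)(x_1^2,x_2^2,x_3^2)^{s-1}$, and $a=0$ with $b,c$ odd in the second case gives precisely $x_2x_3(x_2^2,x_3^2)^{s-1}$. Neither you nor the paper actually carries out the coefficient-matching for the $e_i$; your outline of how to do it is fine.
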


\begin{proof}
If $M=\{ x_1x_2\}$, then the nonzero monomials in $S_3/(x_1^2,x_2^2,x_3^2,x_1x_2)^s$ of degree 2s are the generators of 
$(x_1x_3,x_2x_3)(x_1^2,x_2^2,x_3^2)^{s-1}$,
and $S_3/(x_1^2,x_2^2,x_3^2,x_1x_2)^s$ is 0 in degree $2s+1$. If $M=\{ x_1x_2,x_1x_3\}$, then the nonzero monomials in 
$S_3/(x_1^2,x_2^2,x_3^2,x_1x_2,x_1x_3)^s$ of degree 2s are
the generators of $x_2x_3(x_2^2,x_3^2)^{s-1}$ and $S_3/(x_1^2,x_2^2,x_3^2,x_1x_2,x_1x_3)^s$ is 0 in degree $2s+1$.
\end{proof}

Now we turn to four variables.

\begin{conj}
\begin{itemize}
\ 
\item If $M=\{ x_1x_2\}$, then $H_s(S_n/I_{n,M}^s)=\frac{2s^3+5s^2+3s}{2}t^{2s}+2{s+2\choose3}t^{2s+1}$. The Hilbert coefficients are$(e_0,e_1)=(16,4)$.

 \item If $M=\{ x_1x_2,x_1x_3\}$, then $H_s(S_n/I_{n,M}^s)=4{s+2\choose3}t^{2s}+{s+1\choose2}t^{2s+1}$. The Hilbert coefficients are
 $(e_0,e_1,e_2)=(16,16,2)$.

\item If $M=\{ x_1x_2,x_3x_4\}$, then $H_s(S_n/I_{n,M}^s)=4{s+2\choose3}t^{2s}$. The Hilbert coefficients are
$(16,16,1)$.

\item If $M=\{ x_1x_2,x_1x_3,x_2x_3\}$, then $H_s(S_n/I_{n,M}^s)=\frac{s(s+1)(4s+5)}{6}t^{2s}$. The Hilbert coefficients are
$(e_0,e_1)=(16,8)$.

\item If $M=\{ x_1x_2,x_1x_3,x_1x_4\}$, then $H_s(S_n/I_{n,M}^s)=s(2s+1)t^{2s}+{s+1\choose2}t^{2s+1}$. The Hilbert coefficients are
$(e_0,e_1,e_2)=(16,12,6)$.

\item If $M=\{ x_1x_2,x_2x_3,x_3x_4\}$, then $H_s(S_n/I_{n,M}^s)=s(2s+1)t^{2s}$. The Hilbert coefficients are
$(16,12,5)$.

\item If $M=\{ x_1x_2,x_2x_3,x_3x_4,x_4x_1\}$, then $H_s(S_n/I_{n,M}^s)=2st^{2s}$. The Hilbert coefficients are
$(16,12,1,2)$.

\item If $M=\{ x_1x_2,x_1x_3,x_1x_4,x_2x_3\}$, then $H_s(S_n/I_{n,M}^s)=2{s+1\choose2}t^{2s}$. The Hilbert coefficients are
$(16,12,3)$.

\item If $M=\{ x_1x_2,x_1x_3,x_1x_4,x_2x_3,x_2x_4\}$, then $H_s(S_n/I_{n,M}^s)=st^{2s}$. The Hilbert coefficients are
$(16,12,1,1)$.
\end{itemize}
\end{conj}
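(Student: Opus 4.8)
The plan is to reduce everything to a single monomial-counting problem and then evaluate it case by case. Since every generator of $I=I_{4,M}$ has degree $2$, the power $I^s$ is generated in degree $2s$ by the products $g_1\cdots g_s$ of $s$ quadratic generators (the squares $x_i^2$ and the edges $x_ix_j\in M$). Hence a monomial $x^a=x_1^{a_1}\cdots x_4^{a_4}$ of degree $d=\sum_i a_i$ lies in $I^s$ exactly when it is divisible by such a product, i.e. when the exponent vector $a$ admits $s$ pairwise slot-disjoint allowed pairs; both inclusions are immediate from the monomial structure. Encoding this as a capacitated graph $G_M$ (one vertex per variable with capacity $a_i$, a loop at each vertex, and an edge for each element of $M$) gives: $x^a\in I^s$ iff the maximum number $\nu_{G_M}(a)$ of disjoint allowed pairs is at least $s$. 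Consequently the nonzero classes in degree $d$ are precisely the $x^a$ with $\sum_i a_i=d$ and $\nu_{G_M}(a)\le s-1$. In degrees $i<2s$ nothing lies in $I^s$, which accounts for the common lower part $\sum_{i=0}^{2s-1}\binom{i+3}{3}t^i$; the content of the conjecture is the top part $H_s$ in degrees $2s$ and $2s+1$.

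Next I would compute $\nu_{G_M}(a)$. Pairing greedily within each variable by squares contributes $\sum_i\lfloor a_i/2\rfloor$ pairs and leaves one uncovered slot at each variable of odd exponent; the remaining freedom is to cover these leftover odd slots by $M$-edges, possibly routing through an even-exponent variable. Thus $d-2\nu_{G_M}(a)$ is the minimum number of odd-exponent variables that cannot be paired off by $M$-edges within the available capacities, and for the nine explicit graphs this is a finite computation. When $M$ has no vertex forced to serve as a hub, the deficiency depends only on the parity pattern and on $M$: in degree $2s$ a monomial survives iff its (even-sized) odd-exponent set cannot be covered by $M$-edges, in degree $2s+1$ iff its three-element odd-exponent set admits no covering $M$-edge, and from degree $2s+2$ on everything lies in $I^s$, so the top degree is $2s+1$ (or $2s$). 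This is the step that recovers each stated $H_s$.

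With the surviving parity patterns identified, the counting is stars-and-bars: the monomials of degree $d$ whose odd-exponent set is a fixed $O$ with $|O|=o$ number $\binom{(d-o)/2+3}{3}$. Summing over the surviving $O$ in degrees $2s$ and $2s+1$ gives $H_s$; for instance for $M=\{x_1x_2\}$ the survivors are the five pairs $\{i,j\}\ne\{1,2\}$ together with the all-odd pattern in degree $2s$, giving $5\binom{s+2}{3}+\binom{s+1}{3}=\tfrac{s(s+1)(2s+3)}{2}$, plus $2\binom{s+2}{3}$ in degree $2s+1$, exactly as conjectured. Finally I would read off the Hilbert coefficients: adding the lower part $\sum_{i=0}^{2s-1}\binom{i+3}{3}=\binom{2s+3}{4}$ to $H_s(1)$ yields $l(S_4/I^s)$ as a degree-$4$ polynomial in $s$, which I expand in the basis $\binom{s+3}{4},\binom{s+2}{3},\binom{s+1}{2},\binom{s}{1},1$ and match against $e_0\binom{s+3}{4}-e_1\binom{s+2}{3}+\cdots$ to obtain $(e_0,\dots,e_4)$.

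The hard part is the capacity-sensitive evaluation of $\nu_{G_M}(a)$. In three variables each $M$-edge is used at most once and its odd endpoints automatically supply the capacity needed, so a naive matching among the odd-exponent variables already decides membership, which is why those cases are clean. In four variables this fails: covering two odd ``leaf'' variables can route through an even-exponent ``hub'' and is then limited by the hub's exponent. The star $M=\{x_1x_2,x_1x_3,x_1x_4\}$ is the decisive example, where a degree-$2s$ monomial survives only when the center exponent $a_1$ is too small to cover the odd leaves, namely $a_1=0$ with two odd leaves or $a_1=1$ with three, contributing $3\binom{s+1}{2}+\binom{s}{2}=s(2s+1)$ rather than a uniform ``pattern times binomial'' count. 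Thus the survivor set stratifies by the size of one or two exponents, and organizing this stratification uniformly across all $M$ — rather than re-deriving $\nu_{G_M}$ ad hoc for each of the nine graphs — is the main obstacle, and presumably the reason the four-variable statement remains only a conjecture.
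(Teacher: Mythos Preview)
The paper offers no proof of this statement --- it is presented purely as a conjecture based on computation --- so your proposal goes beyond what the paper attempts. Your reduction is correct: for a monomial ideal $I$ generated in degree $2$, the condition $x^a\in I^s$ is exactly the existence of $s$ disjoint allowed pairs (loops $x_i^2$ or edges $x_ix_j\in M$) within the capacity vector $a$, and your two worked examples ($M=\{x_1x_2\}$ and the star) check out.

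Two remarks. Capacity constraints intrude in more of the nine cases than you indicate --- on the path $M=\{x_1x_2,x_2x_3,x_3x_4\}$, for instance, the odd set $\{1,4\}$ survives precisely when $a_2=0$ or $a_3=0$, which needs a small inclusion--exclusion --- but each such refinement is still a finite check. More importantly, your closing sentence undersells your own argument: deriving $\nu_{G_M}$ case by case for the nine graphs \emph{is} a valid proof, merely not a uniform one, so the obstacle you name is aesthetic rather than mathematical. Carrying the nine cases through would settle the $H_s$ part of the conjecture outright.

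One caution on the final step. Using the identity $\binom{2s+3}{4}=16\binom{s+3}{4}-12\binom{s+2}{3}+\binom{s+1}{2}$ to extract coefficients, the case $M=\{x_1x_2,x_1x_3\}$ yields $l(s)=16\binom{s+3}{4}-8\binom{s+2}{3}+2\binom{s+1}{2}$, i.e.\ $(e_0,e_1,e_2)=(16,8,2)$ rather than the printed $(16,16,2)$. So expect your computation to confirm the $H_s$ formulas while disagreeing with some of the listed Hilbert-coefficient tuples.
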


\smallskip
The author reports there are no competing interests to declare.

\bigskip
Ralf Fr\"oberg (frobergralf@gmail.com)

Department of mathematics, S-10691, Stockholm, Sweden
\end{document}